\newcommand{\R}{{\mathbb  R}}
\numberwithin{equation}{section}
\newtheorem{thm}{\bf Theorem}[section]
\newtheorem{defn}{\bf Definition}[section]
\theoremstyle{remark}
\newtheorem{rem}{\bf Remark}[section]
\begin{document}

\title{Double bracket structures on Poisson manifolds}
\author{Petre Birtea\\
{\small Departamentul de Matematic\u a, Universitatea de Vest din Timi\c soara}\\
{\small Bd. V. P\^arvan, Nr. 4, 300223 Timi\c soara, Rom\^ania}\\
{\small E-mail: birtea@math.uvt.ro}}
\date{}

\maketitle

\noindent \textbf{Keywords:} Poisson manifold, Riemannian manifold, double bracket vector field, symplectic leaf. 

\begin{abstract}
\noindent On a Poisson manifold endowed with a Riemannian metric we will construct a vector field that generalizes the double bracket vector field defined on semi-simple Lie algebras. On a regular symplectic leaf we will construct a generalization of the normal metric such that the above vector field restricted to the symplectic leaf is a gradient vector field with respect to this metric.
\end{abstract}

\section{Introduction}

We will recall the classical case of double bracket vector field on a semi-simple Lie algebra.
Let $(\frak g,[,])$ be a semi-simple Lie algebra with $\bf k:{\frak g}\times {\frak g}\to \mathbb{R}$ the Killing form, i.e. a non-degenerate, symmetric, Ad-invariant bilinear form. The following vector field, called the double bracket vector field, has been introduced by Brockett \cite{brockett-1}, \cite{brockett-2}, see also \cite{bloch}, in the context of dynamical numerical algorithms and linear programming
\begin{equation}\label{1}
\dot L=[L,[L,N]],
\end{equation}
where $N\in {\frak g}$ is a regular element in ${\frak g}$ and $L\in {\frak g}$. It turns out that the double bracket vector field is tangent to the adjoint orbits of the Lie algebra ${\frak g}$, orbits that are symplectic leaves for the canonical Lie-Poisson bracket on ${\frak g}$ induced by the K--K--S Poisson bracket on ${\frak g}^*$ and the identification between ${\frak g}$ and ${\frak g}^*$ being given by the Killing form.

More precisely, by identifying the Lie algebra ${\frak g}$ with its dual ${\frak g}^*$ using the Killing form, the K--K--S Poisson bracket on ${\frak g}^*$ transforms into the Poisson bracket on ${\frak g}$,
\begin{equation}\label{2}
\{F,G\}_{{\frak g}}(L)={\bf k}(L,[\nabla F(L),\nabla G(L)]),
\end{equation}
where $F,G:{\frak g}\to \mathbb{R}$ are smooth functions. Also, the Hamiltonian vector field associated to a smooth function $H:{\frak g}\to \mathbb{R}$ is given by
$$\dot L=-[L,\nabla H(L)],$$
see \cite{ratiu-1}, \cite{ratiu-2} and \cite{vanhaecke} for a pedagogical exposition of the above construction.

In the case of a compact Lie algebra ${\frak g}$, it has been proved that the double bracket vector field \eqref{1} when restricted to a regular adjoint orbit $\Sigma\subset {\frak g}$ is a gradient vector field with respect to the normal metric. We recall briefly this construction, for details see \cite{bloch-brockett}, \cite{bloch-flaschka}. For every $L\in \Sigma$ consider the orthogonal decomposition with respect to the minus Killing form $\bf k$, ${\frak g}={\frak g}_L\oplus {\frak g}^L$, where ${\frak g}_L:=\{X\in {\frak g}|~[L,X]=0\}$ and ${\frak g}^L=Im(ad_L)$. The linear space ${\frak g}_L$ can be identified with the tangent space $T_L\Sigma$ and ${\frak g}^L$ with the normal space. One can endow the adjoint orbit $\Sigma$ with the normal metric \cite{besse}, or standard metric \cite{atiyah},
$${\bf n}^{\Sigma}([L,X],[L,Y])=-{\bf k}(X^L,Y^L),$$
where $X^L,Y^L$ are the normal components according to the above orthogonal decomposition of $X$, respectively $Y$.
\begin{thm} [\cite{bloch-brockett}, \cite{bloch-flaschka}] \label{bloch-ratiu}
Let $H:{\frak g}\to \mathbb{R}, H(L)={\bf k}(L,N)$. Then
$$\nabla_{{\bf n}^{\Sigma}}H_{|{\Sigma}}(L)=[L,[L,N]] .$$
\end{thm}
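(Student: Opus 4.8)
The plan is to compute the gradient directly from its defining property. By definition, the gradient vector field $\nabla_{{\bf n}^{\Sigma}}H_{|\Sigma}(L)$ is the unique tangent vector to $\Sigma$ at $L$ such that for every tangent vector $v\in T_L\Sigma$ we have
\begin{equation*}
{\bf n}^{\Sigma}\bigl(\nabla_{{\bf n}^{\Sigma}}H_{|\Sigma}(L),v\bigr)=dH_{|\Sigma}(L)\cdot v .
\end{equation*}
Since $T_L\Sigma$ is identified with ${\frak g}^L=\mathrm{Im}(ad_L)$, every tangent vector can be written as $v=[L,Y]$ for some $Y\in{\frak g}$, and the candidate gradient $[L,[L,N]]$ is manifestly of this form (take $Y=[L,N]$), hence genuinely tangent. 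So the whole proof reduces to verifying the defining equation for all $v=[L,Y]$.

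First I would compute the right-hand side. The curve $t\mapsto \mathrm{Ad}_{\exp(tY)}L$ passes through $L$ with velocity $[Y,L]=-[L,Y]$ at $t=0$; adjusting signs, the tangent vector $[L,Y]$ is realized by a curve along which $H(L)={\bf k}(L,N)$ differentiates to ${\bf k}([L,Y],N)$. Using $\mathrm{Ad}$-invariance of the Killing form (equivalently the $ad$-invariance identity ${\bf k}([A,B],C)={\bf k}(A,[B,C])$), I would rewrite ${\bf k}([L,Y],N)={\bf k}(Y,[N,L])=-{\bf k}(Y,[L,N])$, or reorganize it into a form matching the left-hand side.

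Next I would compute the left-hand side using the definition of the normal metric. Writing the candidate gradient as $[L,[L,N]]$ with $Z:=[L,N]$, so that $\nabla H=[L,Z]$, and writing $v=[L,Y]$, the normal metric gives ${\bf n}^{\Sigma}([L,Z],[L,Y])=-{\bf k}(Z^L,Y^L)$, where $Z^L,Y^L$ denote the ${\frak g}^L$-components. Here the key structural point is that $Z=[L,N]\in\mathrm{Im}(ad_L)={\frak g}^L$ already, so $Z^L=Z=[L,N]$; thus ${\bf n}^{\Sigma}([L,Z],[L,Y])=-{\bf k}([L,N],Y^L)$. Then I would use $ad$-invariance once more, ${\bf k}([L,N],Y^L)={\bf k}(N,[L,Y^L])$, together with the fact that $[L,Y^L]=[L,Y]$ (because the kernel component $Y_L$ satisfies $[L,Y_L]=0$), to reduce both sides to the same expression involving ${\bf k}(N,[L,Y])$ and conclude equality.

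The main obstacle I expect is bookkeeping the signs and the orthogonal-decomposition subtleties rather than any deep idea: one must be careful that the normal metric is defined via the \emph{minus} Killing form and via the normal components $X^L,Y^L$, so the matching of left- and right-hand sides hinges on correctly using the orthogonality of the decomposition ${\frak g}={\frak g}_L\oplus{\frak g}^L$ (so that $[L,Y]=[L,Y^L]$ and the $ad_L$-image lands in ${\frak g}^L$). The crucial simplification making everything work is that $[L,N]$ automatically lies in ${\frak g}^L$, so its normal component is itself; the whole computation is then just two applications of $ad$-invariance of the Killing form and careful sign tracking.
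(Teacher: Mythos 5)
The paper itself offers no proof of Theorem~\ref{bloch-ratiu}: it is imported from \cite{bloch-brockett}, \cite{bloch-flaschka} as background, so there is no internal argument to compare yours against. Judged on its own merits, your strategy is the right (and standard) one: check the defining identity of the gradient against all tangent vectors $v=[L,Y]$, using the two structural facts you isolate, namely that $[L,N]$ already lies in ${\frak g}^L=\mathrm{Im}(ad_L)$ (so it is its own normal component) and that $[L,Y^L]=[L,Y]$, together with invariance of $\bf k$. You also correctly take $T_L\Sigma=\mathrm{Im}(ad_L)={\frak g}^L$; note that the paper's sentence identifying ${\frak g}_L$ with $T_L\Sigma$ has these swapped, and your reading is the one consistent with the normal metric formula.

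There is, however, one sign error, and it is not harmless: you assert ${\bf k}([L,N],Y^L)={\bf k}(N,[L,Y^L])$, whereas invariance gives ${\bf k}([L,N],Y^L)=-{\bf k}(N,[L,Y^L])$. (Your own cited identity ${\bf k}([A,B],C)={\bf k}(A,[B,C])$ gives ${\bf k}([L,N],Y^L)={\bf k}(L,[N,Y^L])$; pulling $N$ to the front instead costs a minus sign from antisymmetry, exactly as in your correct first rewriting ${\bf k}([L,Y],N)=-{\bf k}(Y,[L,N])$.) Followed literally, your identity makes the left-hand side equal to $-{\bf k}(N,[L,Y])$ while the differential equals $+{\bf k}(N,[L,Y])$, so you would conclude $\nabla_{{\bf n}^{\Sigma}}H_{|\Sigma}(L)=-[L,[L,N]]$, the negative of the theorem. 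With the sign fixed the computation closes as you intended:
\begin{equation*}
{\bf n}^{\Sigma}\bigl([L,[L,N]],[L,Y]\bigr)=-{\bf k}\bigl([L,N],Y^L\bigr)={\bf k}\bigl(N,[L,Y^L]\bigr)={\bf k}\bigl(N,[L,Y]\bigr)=dH(L)\cdot[L,Y].
\end{equation*}
Alternatively, you can skip the second invariance application entirely: since $[L,N]\in{\frak g}^L$ and ${\frak g}_L\perp{\frak g}^L$ with respect to $\bf k$, one has $-{\bf k}([L,N],Y^L)=-{\bf k}([L,N],Y)$, which equals ${\bf k}([L,Y],N)$ by a single application of invariance. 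So the architecture of your proof is sound and the gap is a single sign, but as written the derivation produces the wrong gradient.
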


The purpose of this paper is to obtain an analog of the above theorem in the setting of a general Poisson manifold $(M,\{\cdot,\cdot\})$. For this, first we need to construct a generalized double bracket vector field tangent to the symplectic leaves of an arbitrary Poisson manifold that corresponds to the double bracket vector field \eqref{1} for the case when Poisson manifold is ${\frak g}$ endowed with the Poisson bracket \eqref{2}. Following an idea presented in \cite{birtea-comanescu} we will construct a co-metric tensor that couples the Poisson structure and the Riemannian structure. Next step is to construct a generalized normal Riemannian metric on regular symplectic leaves such that the restriction of the generalized double bracket vector field to a regular leaf is the gradient of a smooth function with respect to this generalized normal Riemannian metric.

For the case of a compact Lie algebra ${\frak g}$ we will rediscover the geometry of the double bracket vector field, where ${\frak g}$ is endowed with the Poisson structure \eqref{2}. We will exemplify our construction on the case of the non-compact semi-simple Lie algebra $sl(2,\mathbb{R})$.

\section{Generalized double bracket vector field on Poisson manifolds}

Let $(M,{\bf g})$ be a (pseudo-)Riemannian manifold that is also endowed with a Poisson bracket
$$\{\cdot,\cdot\}:{\cal C}^{\infty}(M)\times {\cal C}^{\infty}(M)\to {\cal C}^{\infty}(M).$$

\begin{defn}
We call the {\bf co-metric double bracket tensor} the following symmetric contravariant 2-tensor ${\bf D}:\Omega^1(M)\times \Omega^1(M)\to {\cal C}^{\infty}(M)$,
$${\bf D}(\alpha,\beta):={\bf g}(\#_{\bf {\Pi}}\alpha,\#_{\bf {\Pi}}\beta ),$$
where ${\bf \Pi}$ is the skew-symmetric contravariant 2-tensor associated with the bracket $\{\cdot ,\cdot \}$.
\end{defn}
When $\alpha =dF$ and $\beta = dG$, where $F,G\in {\cal C}^{\infty}(M)$ we have
$${\bf D}(dF,dG)={\bf g}(X_F,X_G).$$
In the finite dimensional case the symmetric matrix associated to the contravariant tensor ${\bf D}$ is given by
$$[{\bf D}]=[{\bf \Pi}]^T\cdot[{\bf g}]\cdot[{\bf \Pi}].$$
The 2-tensor ${\bf D}$ is degenerate and its kernel is equal with the kernel of the Poisson bivector ${\bf \Pi}$ in the case of a Riemannian metric on $M$.

We introduce the generalization of the double bracket vector field. In a different context, a similar construction that used a co-metric tensor has also been given in \cite{birtea-comanescu}.
\begin{defn}
For $G\in {\cal C}^{\infty}(M)$, the vector field
$${\bf v}_G:=-{\bf i}_{dG}{\bf D}$$
is called the {\bf generalized double bracket vector field}.
\end{defn}

The vector field ${\bf v}_G$ is a natural generalization of the double bracket vector field defined on a semi-simple Lie algebra.
\begin{thm}\label{egalitate}
Let $({\frak g},[\cdot,\cdot])$ be a semi-simple Lie algebra endowed with the Poisson bracket defined by \eqref{2}. Then,
$${\bf v}_G(\xi)=[\xi,[\xi,\nabla G(\xi)]].$$
\end{thm}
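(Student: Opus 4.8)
The plan is to unwind both definitions against an arbitrary test function and then recognize the resulting expression through the Ad-invariance of the Killing form. Throughout I take the metric ${\bf g}$ on ${\frak g}$ to be the Killing form ${\bf k}$ itself, which is the same non-degenerate form used to identify ${\frak g}$ with ${\frak g}^*$ and hence to define the gradient: for $F\in {\cal C}^{\infty}({\frak g})$ the gradient $\nabla F(\xi)$ is characterized by ${\bf k}(\nabla F(\xi),Y)=dF(\xi)\cdot Y$ for every $Y\in {\frak g}$. With this choice the gradient appearing in the target formula is automatically the one compatible with both the bracket \eqref{2} and the tensor ${\bf D}$.

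First I would record the Hamiltonian vector field of the bracket \eqref{2}. As quoted in the Introduction, the Hamiltonian vector field of $H$ is $X_H(\xi)=-[\xi,\nabla H(\xi)]$, so the sharp map reads $\#_{\bf \Pi}(dH)(\xi)=-[\xi,\nabla H(\xi)]$. Substituting this into the definition of the co-metric double bracket tensor gives, for all $F,G$,
\[
{\bf D}(dF,dG)(\xi)={\bf g}\big(\#_{\bf \Pi}dF,\#_{\bf \Pi}dG\big)={\bf k}\big([\xi,\nabla F(\xi)],[\xi,\nabla G(\xi)]\big),
\]
the two minus signs cancelling.

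Next I would use the defining property of ${\bf v}_G=-{\bf i}_{dG}{\bf D}$: contracting the contravariant $2$-tensor ${\bf D}$ with $dG$ and pairing against an arbitrary $1$-form $dF$ yields $dF({\bf v}_G)=-{\bf D}(dG,dF)$. Rewriting the left-hand side through the gradient, $dF({\bf v}_G)={\bf k}(\nabla F,{\bf v}_G)$, the claimed identity ${\bf v}_G(\xi)=[\xi,[\xi,\nabla G(\xi)]]$ reduces to verifying
\[
{\bf k}\big(\nabla F,[\xi,[\xi,\nabla G]]\big)=-{\bf k}\big([\xi,\nabla G],[\xi,\nabla F]\big)
\]
for all $F$. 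This is exactly one application of the Ad-invariance relation ${\bf k}([X,Y],Z)=-{\bf k}(Y,[X,Z])$ with $X=\xi$, $Y=[\xi,\nabla G]$, $Z=\nabla F$, combined with the symmetry of ${\bf k}$. Since the equality of $dF({\bf v}_G)$ with $dF([\xi,[\xi,\nabla G]])$ then holds for every $F$, and the differentials $dF(\xi)$ exhaust ${\frak g}^*$ while ${\bf k}$ is non-degenerate, the two vectors ${\bf v}_G(\xi)$ and $[\xi,[\xi,\nabla G(\xi)]]$ must coincide.

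The computation is short; the only real care needed is sign bookkeeping. The main point to get right is the interaction of three sign conventions — the minus in the Hamiltonian vector field, the minus in the definition of ${\bf v}_G$, and the sign in the Ad-invariance identity — together with ensuring that the metric forming ${\bf D}$ is the very Killing form that defines $\nabla$, so that the pairing of ${\bf v}_G$ against $dF$ converts cleanly into a Killing pairing against $\nabla F$. I would also note that only the non-degeneracy of ${\bf k}$, and not its definiteness, enters the argument, so the proof covers the indefinite case (such as $sl(2,\mathbb{R})$) with no modification.
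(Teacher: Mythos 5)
Your proof is correct, but it follows a genuinely different route from the paper's. The paper proves the identity by brute-force coordinate computation: it fixes a basis $\{{\bf e}_i\}$ of ${\frak g}$, expands ${\bf D}^{ij}(\xi)$ in terms of the structure constants $C^{\gamma}_{p\alpha}$ and the Killing components $k_{ij}$, $k^{ij}$, expands $[\xi,[\xi,\nabla G(\xi)]]$ the same way, and matches coefficients, invoking the invariance of the Killing form in its index form $-k_{\gamma\tau}C^{\gamma}_{p\alpha}=k_{\alpha\gamma}C^{\gamma}_{p\tau}$. You instead work dually and coordinate-free: you pair ${\bf v}_G$ against an arbitrary $dF$, use the Hamiltonian vector field formula $X_H(\xi)=-[\xi,\nabla H(\xi)]$ to identify ${\bf D}(dF,dG)={\bf k}([\xi,\nabla F],[\xi,\nabla G])$, apply the Ad-invariance identity ${\bf k}([X,Y],Z)=-{\bf k}(Y,[X,Z])$ exactly once, and conclude by non-degeneracy of ${\bf k}$ since the covectors $dF(\xi)$ exhaust $T^*_\xi{\frak g}$. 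Both arguments hinge on the same key fact (invariance of the Killing form), but yours is shorter, avoids the index bookkeeping entirely, and makes two things explicit that the paper leaves implicit: that the ambient metric ${\bf g}$ entering ${\bf D}$ must be taken to be the Killing form itself (this is exactly what the paper's proof silently does in writing ${\bf D}^{ij}={\bf k}(X_{\xi^i},X_{\xi^j})$), and that only non-degeneracy, never definiteness, of ${\bf k}$ is used, so the indefinite case such as $sl(2,\mathbb{R})$ is covered without modification. What the paper's computation buys in exchange is an explicit component formula ${\bf v}_G(\xi)=C^i_{p\tau}C^{\tau}_{s\beta}k^{\beta j}\xi^p\xi^s\frac{\partial G}{\partial \xi^j}{\bf e}_i$, which is of the type reused in the concrete matrix computations of the $sl(2,\mathbb{R})$ section.
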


\begin{proof}
Let $\{{\bf e}_i\}_{i=\overline{1,n}}$ be a base for ${\frak g}$ and $\xi^i:{\frak g}\to \mathbb{R},i=\overline{1,n}$ the corresponding coordinate functions. Then,
$${\bf v}_G(\xi)=-\left({\bf i}_{dG}{\bf D}\right)(\xi)=-{\bf D}^{ij}(\xi)\frac{\partial G}{\partial \xi^j}{\bf e}_i,$$
where
\begin{align*}
-{\bf D}^{ij}(\xi)&=-{\bf k}\left(X_{\xi^i}(\xi),X_{\xi^j}(\xi)\right)=-{\bf k}\left([\xi,\nabla \xi^i(\xi)],[\xi,\nabla \xi^j(\xi)]\right)\\
&=-{\bf k}\left([\xi^p{\bf e}_p,k^{\alpha i}{\bf e}_{\alpha}],[\xi^s{\bf e}_s,k^{\beta j}{\bf e}_{\beta}]\right)\\
&=-{\bf k}\left(C_{p\alpha}^{\gamma}k^{\alpha i}\xi^p{\bf e}_{\gamma},C_{s\beta}^{\tau}k^{\beta j}\xi^s{\bf e}_{\tau}\right)\\
&=-k_{\gamma\tau}C_{p\alpha}^{\gamma}C_{s\beta}^{\tau}k^{\alpha i}k^{\beta j}\xi^p\xi^s\\
&\stackrel{(**)}{=}k_{\alpha\gamma}C_{p\tau}^{\gamma}C_{s\beta}^{\tau}k^{\alpha i}k^{\beta j}\xi^p\xi^s\\
&=\delta _{\gamma i}C_{p\tau}^{\gamma}C_{s\beta}^{\tau}k^{\beta j}\xi^p\xi^s\\
&=C_{p\tau}^iC_{s\beta}^{\tau}k^{\beta j}\xi^p\xi^s.
\end{align*}
Consequently,
$${\bf v}_G(\xi)=C_{p\tau}^iC_{s\beta}^{\tau}k^{\beta j}\xi^p\xi^s\frac{\partial G}{\partial \xi^j}{\bf e}_i.$$
We have the following computation,
\begin{align*}
[\xi, [\xi,\nabla G(\xi)]]&=\left[\xi^p{\bf e}_p,\left[\xi^s{\bf e}_s,k^{\beta j}\frac{\partial G}{\partial \xi^j}{\bf e}_{\beta}\right]\right]\\
&=\left[\xi^p{\bf e}_p,C_{s\beta}^{\tau}k^{\beta j}\xi^s\frac{\partial G}{\partial \xi^j}{\bf e}_{\tau}\right]\\
&=C_{p\tau}^i C_{s\beta}^{\tau}k^{\beta j}\xi^p\xi^s\frac{\partial G}{\partial \xi^j}{\bf e}_i.
\end{align*}

For the equality (**) we used the bi-invariance of the Killing metric, ${\bf k}([X,Y],Z)+{\bf k}(Y,[X,Z])=0$. Applying this equality to the three vectors ${\bf e}_p,{\bf e}_{\alpha},{\bf e}_{\tau}$ we have
$${\bf k}\left([{\bf e}_p,{\bf e}_{\alpha}],{\bf e}_{\tau}\right)+{\bf k}\left({\bf e}_{\alpha},[{\bf e}_p,{\bf e}_{\tau}]\right)=0$$
$$\Leftrightarrow {\bf k}\left(C_{p\alpha}^{\gamma}{\bf e}_{\gamma},{\bf e}_{\tau}\right)+{\bf k}\left({\bf e}_{\alpha},C_{p\tau}^{\gamma}{\bf e}_{\gamma}\right)=0
\Leftrightarrow -k_{\gamma\tau}C_{p\alpha}^{\gamma}=k_{\alpha\gamma}C_{p\tau}^{\gamma}.$$
\end{proof}

On a symplectic leaf $\Sigma\subset M$ we will construct a (pseudo-)Riemannian metric that will generalize the normal metric from the case of a compact semi-simple Lie algebra.

\begin{defn}\label{double-bracket-metric}
Let $x\in\Sigma$ and $X_{tan},Y_{tan}\in {\cal X}(\Sigma)$, then {\bf the double bracket metric} $\boldsymbol{\tau}_{db}^{\Sigma}:{\cal X}(\Sigma)\times {\cal X}(\Sigma)\to {\cal C}^{\infty}(\Sigma)$ is defined as
$$\boldsymbol{\tau}_{db}^{\Sigma}(x)\left(X_{tan}(x),Y_{tan}(x)\right):=({\bf g}^{\Sigma}_{ind})^{-1}(x)\left({\bf i}_{X_{tan}}\omega(x),{\bf i}_{Y_{tan}}\omega(x)\right),$$
where $\omega\in \Omega^2(\Sigma)$ is the induced symplectic 2-form on the symplectic leaf $\Sigma$ and $({\bf g}^{\Sigma}_{ind})^{-1}$ is the co-metric tensor associated to the (pseudo-)Riemannian metric ${\bf g}^{\Sigma}_{ind}$ induced on $\Sigma$ by the ambient (pseudo-)Riemannian metric $\bf g$.
\end{defn}

For $x\in \Sigma$, the matrix associated with the co-metric tensor $({\bf g}^{\Sigma}_{ind})^{-1}$ is given by
$$[{\bf g}^{\Sigma}_{ind}(x)]^{-1}=[[{\bf g}(x)]_{_{|T_x\Sigma \times T_x\Sigma}}]^{-1},$$
and consequently, the matrix associated with the double bracket metric $\boldsymbol{\tau}_{db}^{\Sigma}$ has the expression
$$[\boldsymbol{\tau}_{db}^{\Sigma}(x)]=[\omega(x)]^{T}\cdot[[{\bf g}(x)]_{_{|T_x\Sigma \times T_x\Sigma}}]^{-1}\cdot[\omega(x)].$$
\medskip

Note that in the case of a pseudo-Riemannian metric ${\bf g}$ the co-metric tensor $({\bf g}^{\Sigma}_{ind})^{-1}$ does not always exist.

It has been proved by Weinstein \cite{weinstein} that locally any Poisson manifold $(M,\{\cdot,\cdot\})$ is diffeomorphic to a product $\Sigma\times N$, where $\Sigma$ is a symplectic leaf of $M$
and $N\subset M$ is a transverse submanifold to $\Sigma$. Moreover, around any point $x\in M$ there exist a local system of coordinates $({\bf q},{\bf p},{\bf z})$, called Darboux-Weinstein coordinates, such that the symplectic leaf $\Sigma$ that contains the point $x$ is locally described by ${\bf z}={\bf 0}$ and ${\bf z}$ are local coordinates on the transverse submanifold $N$. In this set of coordinates the matrix associated with the Poisson tensor $\Pi$ has the local expression
$$[{\bf \Pi}]=\left(\begin{array}{c|c}
\left.\begin{array}{cc}0&\mathbb{I}_n\\-\mathbb{I}_n&0\end{array}\right. &0\\
\hline\\
0&{\Pi}^{'}({\bf z})
\end{array}\right),$$
where the matrix $[{\Pi}^{'}({\bf z})]$ corresponds to a Poisson bivector ${\Pi}^{'}$ that endows the submanifold $N$ with a Poisson structure called the transverse Poisson structure. Also, $\Pi^{'}({\bf 0})={\bf 0}$ and in the case when $\Sigma$ is a regular symplectic leaf we have $\Pi^{'}({\bf z})={\bf 0}$, for any $\bf z$ in the domain of Darboux-Weinstein coordinates. The transverse Poisson structure has been extensively studied in \cite{damianou}, \cite{cushman}, \cite{cruz}.

\medskip

In analogy with the compact semi-simple Lie algebra case we have the following result.
\begin{thm}\label{gradient th}
The generalized double bracket vector field is tangent to regular symplectic leaves and the restriction to a regular leaf $\Sigma$ is the gradient vector field of the restricted function $G_{|\Sigma}$ with respect to the double bracket metric. More precisely,
$$({\bf v}_G)_{|\Sigma}=-\nabla _{\boldsymbol{\tau}_{db}^{\Sigma}}G_{|\Sigma}.$$
\end{thm}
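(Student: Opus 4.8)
The plan is to work in Darboux–Weinstein coordinates adapted to the regular leaf $\Sigma$ and to verify the two assertions separately: first, tangency of ${\bf v}_G$ to $\Sigma$; second, the gradient identity. Since $\Sigma$ is regular, in these coordinates $\Pi'({\bf z})={\bf 0}$, so the Poisson tensor $[{\bf \Pi}]$ has the block form with the standard symplectic $\begin{pmatrix}0&\mathbb{I}_n\\-\mathbb{I}_n&0\end{pmatrix}$ in the $({\bf q},{\bf p})$ directions and zeros everywhere involving the transverse $\bf z$ directions.

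For tangency, recall that $\#_{\bf \Pi}\alpha$ always lands in the image of $\#_{\bf \Pi}$, which equals the tangent space to the symplectic leaf. Since ${\bf v}_G=-{\bf i}_{dG}{\bf D}$ and ${\bf D}(\alpha,\beta)={\bf g}(\#_{\bf \Pi}\alpha,\#_{\bf \Pi}\beta)$, the vector ${\bf v}_G$ is $-\#_{\bf g}$ applied to the one-form $\beta\mapsto {\bf g}(\#_{\bf \Pi}dG,\#_{\bf \Pi}\beta)$; the inner factor $\#_{\bf \Pi}dG=X_G$ is tangent to $\Sigma$, but I must check that raising with $\bf g$ and contracting does not push ${\bf v}_G$ off the leaf. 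The cleanest route is to compute $[{\bf D}]=[{\bf \Pi}]^T[{\bf g}][{\bf \Pi}]$ directly in the block coordinates: because the right factor $[{\bf \Pi}]$ kills the $\bf z$-rows and the left factor $[{\bf \Pi}]^T$ kills the $\bf z$-columns, the matrix $[{\bf D}]$ is supported entirely on the $({\bf q},{\bf p})$ block, so ${\bf v}_G={\bf -[{\bf D}]\,dG}$ has no $\partial_{\bf z}$ component and is therefore tangent to $\Sigma$.

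For the gradient identity I would restrict everything to the leaf $\Sigma=\{{\bf z}={\bf 0}\}$ and match matrices. Writing $P$ for the $2n\times 2n$ symplectic block of $[{\bf \Pi}]$ and $g_\Sigma$ for the block $[{\bf g}]_{|T_x\Sigma\times T_x\Sigma}$, the computation above gives $[{\bf v}_G]=-P^T g_\Sigma P\,(dG)_{|\Sigma}$ (the transverse derivatives of $G$ drop out after multiplication by $P$). On the other hand, the symplectic form on the leaf is $[\omega]=P^{-1}$ (the inverse of the tangential Poisson block), so the double bracket metric from Definition \ref{double-bracket-metric} has matrix $[\boldsymbol{\tau}_{db}^\Sigma]=[\omega]^T g_\Sigma^{-1}[\omega]=(P^{-1})^T g_\Sigma^{-1}P^{-1}$. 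Its inverse, which is the co-metric used to raise indices in forming the gradient, is $[\boldsymbol{\tau}_{db}^\Sigma]^{-1}=P\,g_\Sigma\,P^T$. Then $\nabla_{\boldsymbol{\tau}_{db}^\Sigma}G_{|\Sigma}=[\boldsymbol{\tau}_{db}^\Sigma]^{-1}(dG)_{|\Sigma}=P g_\Sigma P^T (dG)_{|\Sigma}$, and using the skew-symmetry $P^T=-P$ one checks $P g_\Sigma P^T=P g_\Sigma(-P)=-P g_\Sigma P$. Hence $\nabla_{\boldsymbol{\tau}_{db}^\Sigma}G_{|\Sigma}=-(-P^Tg_\Sigma P)(dG)_{|\Sigma}=-[{\bf v}_G]_{|\Sigma}$, where I again used $P^T=-P$; this is exactly $({\bf v}_G)_{|\Sigma}=-\nabla_{\boldsymbol{\tau}_{db}^\Sigma}G_{|\Sigma}$.

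The main obstacle I anticipate is bookkeeping rather than conceptual: I must be scrupulous about the relation between the restricted differential $(dG)_{|\Sigma}$ and the full $dG$, since raising indices with $[{\bf D}]$ involves all of $dG$ but the $\partial_{\bf z}G$ contributions are annihilated by the Poisson block and so never enter the tangential answer. A secondary subtlety is justifying that $[\omega]=P^{-1}$ as tangential matrices, i.e. that the symplectic form induced on the leaf is genuinely the inverse of the tangential Poisson bivector; this is the standard fact that the leaf symplectic structure is the one making $\#_{\bf \Pi}$ a symplectomorphism, and in Darboux–Weinstein coordinates it reduces to the literal matrix inverse of the $2n\times 2n$ symplectic block. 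Once these identifications are fixed, the remaining equalities are the sign manipulations via $P^T=-P$ indicated above.
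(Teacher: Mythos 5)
Your proof is correct and follows essentially the same route as the paper: Darboux--Weinstein coordinates adapted to the regular leaf, the observation that $[{\bf D}]=[{\bf \Pi}]^T[{\bf g}][{\bf \Pi}]$ is supported on the tangential $({\bf q},{\bf p})$ block (hence tangency), and inversion of $[\boldsymbol{\tau}_{db}^{\Sigma}]=[\omega]^T[{\bf g}^{\Sigma}_{ind}]^{-1}[\omega]$ to match the two vector fields. The only cosmetic difference is that you perform the final comparison symbolically via $P^T=-P$, whereas the paper expands everything in the explicit blocks $A_{\bf qq}$, $A_{\bf qp}$, $A_{\bf pp}$ and checks that the two column vectors coincide entry by entry.
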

\begin{proof}
Locally the symplectic leaf $\Sigma$ is given by $\bf z=0$, where $(\bf q,\bf p,\bf z)$ is a set of Darboux--Weinstein coordinates. For an arbitrary point $(\bf q,\bf p,\bf 0)\in\Sigma$ in the domain of the Darboux--Weinstein coordinates, we have the following matrices for the corresponding tensors ${\bf g},{\bf \Pi}$, ${\bf g}^{\Sigma}_{ind}$, and $\omega$:
$$[{\bf g}]=\left(\begin{array}{ccc}g_{11}({\bf q},{\bf p},{\bf z})&g_{12}({\bf q},{\bf p},{\bf z})&g_{13}({\bf q},{\bf p},{\bf z})\\
g_{12}^T({\bf q},{\bf p},{\bf z})&g_{22}({\bf q},{\bf p},{\bf z})&g_{23}({\bf q},{\bf p},{\bf z})\\
g_{13}^T({\bf q},{\bf p},{\bf z})&g_{23}^T({\bf q},{\bf p},{\bf z})&g_{33}({\bf q},{\bf p},{\bf z})
\end{array}\right);~
[{\bf \Pi}]=\left(\begin{array}{c|c}
\left.\begin{array}{cc}0&\mathbb{I}_n\\-\mathbb{I}_n&0\end{array}\right. &0\\
\hline\\
0&0
\end{array}\right);$$
$$[{\bf g}^{\Sigma}_{ind}({\bf q},{\bf p})]=
\left(\begin{array}{cc}
A_{\bf q\bf q}(\bf q,\bf p)&A_{\bf q\bf p}(\bf q,\bf p)\\
A_{\bf q\bf p}^T(\bf q,\bf p)&A_{\bf p\bf p}(\bf q,\bf p)\\
\end{array}\right);~[\omega]=\left(\begin{array}{cc}
0&-\mathbb{I}_n\\
\mathbb{I}_n&0
\end{array}
\right),$$
with $A_{{\bf q q}}({\bf q, p})=A_{\bf q\bf q}^T({\bf q, p})=g_{11}({\bf q},{\bf p},{\bf 0});~A_{{\bf p p}}({\bf q, p})=A_{{\bf p p}}^T({\bf q, p})=g_{22}({\bf q},{\bf p},{\bf 0});~A_{{\bf q}{\bf p}}({\bf q},{\bf p})=g_{12}({\bf q},{\bf p},{\bf 0})$.
The vector field ${\bf v}_G$ has the local expression,
\begin{align*}
{\bf v}_G(\bf q,\bf p,\bf 0)&=\left(A_{{\bf q p}}^T({\bf q, p})\frac{\partial G}{\partial \bf p}-A_{{\bf p p}}({\bf q, p})\frac{\partial G}{\partial \bf q}\right)\frac{\partial}{\partial \bf q}\\
&+\left(-A_{{\bf q q}}({\bf q, p})\frac{\partial G}{\partial \bf p}+A_{{\bf q p}}(\bf q,\bf p)\frac{\partial G}{\partial \bf q}\right)\frac{\partial}{\partial \bf p},
\end{align*}
which is a tangent vector to the regular symplectic leaf $\Sigma$.

The vector field $\nabla_{\boldsymbol{\tau}_{db}^{\Sigma}}G_{|\Sigma}\in {\cal X}(\Sigma)$ has the following local expression,
\begin{align*}
-\nabla_{\boldsymbol{\tau}_{db}^{\Sigma}}G_{|\Sigma}(\bf q,\bf p)&=-\left([\omega]^T\cdot [{\bf g}^{\Sigma}_{ind}(\bf q,\bf p)]^{-1}\cdot [\omega]\right)^{-1}\left(\begin{array}{c}
\frac{\partial G}{\partial \bf q}\\
\\
\frac{\partial G}{\partial \bf p}
\end{array}\right)\\
&=-\left([\omega]^T\cdot [{\bf g}^{\Sigma}_{ind}(\bf q,\bf p)]\cdot[\omega]\right)\left(\begin{array}{c}
\frac{\partial G}{\partial \bf q}\\
\\
\frac{\partial G}{\partial \bf p}
\end{array}\right)\\
&=\left(A_{{\bf q p}}^T({\bf q, p})\frac{\partial G}{\partial \bf p}-A_{{\bf p p}}({\bf q, p})\frac{\partial G}{\partial \bf q}\right)\frac{\partial}{\partial \bf q} \\
&+\left(-A_{{\bf q q}}({\bf q, p})\frac{\partial G}{\partial \bf p}+A_{{\bf q p}}({\bf q, p})\frac{\partial G}{\partial \bf q}\right)\frac{\partial}{\partial \bf p}.
\end{align*}
\end{proof}
\begin{rem}
In analogy with the result in \cite{birtea-comanescu}, we have that for any $x\in\Sigma$,
$$\left({\bf D}(x)_{|T_{x}^*\Sigma\times T_{x}^*\Sigma}\right)^{-1}=\boldsymbol{\tau}_{db}^{\Sigma}(x).$$
More precisely, for an arbitrary $x=({\bf p},{\bf q},{\bf 0})\in \Sigma$ in the domain of Darboux--Weinstein local coordinates,
$$\left[{\bf D}(x)_{|T_x^*\Sigma\times T_x^*\Sigma}\right]=\left(\begin{array}{cc}
A_{\bf p\bf p}(\bf q,\bf p)&-A_{\bf q\bf p}^T(\bf q,\bf p)\\
-A_{\bf q\bf p}(\bf q,\bf p)&A_{\bf q\bf q}(\bf q,\bf p)
\end{array}\right)=[\boldsymbol{\tau}_{db}^{\Sigma}(x)]^{-1}.$$
\end{rem}
\medskip
Next, we will show that for the case of a compact semi-simple Lie algebra the double bracket metric and the normal metric coincide up to a sign.
\begin{thm}
Let ${\frak g}$ be a semi-simple compact Lie algebra and $\Sigma\subset {\frak g}$ a regular adjoint orbit. Then
$$\boldsymbol{\tau}_{db}^{\Sigma}=-{\bf n}^{\Sigma}.$$
\end{thm}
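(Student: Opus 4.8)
The plan is to verify the identity pointwise on $\Sigma$ by computing both metrics intrinsically on the tangent space $T_L\Sigma$. First I would record the data. By Theorem \ref{egalitate} the ambient (pseudo-)metric ${\bf g}$ that produces the double bracket vector field is the Killing form ${\bf k}$ itself (negative definite in the compact case), so the induced co-metric on the orbit is governed by ${\bf g}^{\Sigma}_{ind}={\bf k}|_{T_L\Sigma}$. I would identify $T_L\Sigma=Im(ad_L)={\frak g}^L$, and use the ad-invariance of ${\bf k}$ to obtain the ${\bf k}$-orthogonal splitting ${\frak g}={\frak g}_L\oplus{\frak g}^L$; since $ad_L$ restricts to a linear isomorphism of ${\frak g}^L$, every tangent vector is uniquely of the form $[L,X]$ with $X\in{\frak g}^L$. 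Finally I would recall that the symplectic form on the orbit associated with the bracket \eqref{2} is $\omega_L([L,X],[L,Y])={\bf k}(L,[X,Y])$, the sign being pinned down by checking $\omega(X_F,X_G)=\{F,G\}_{\frak g}$ against the Hamiltonian vector field $\dot L=-[L,\nabla H]$ recorded in the introduction.

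The heart of the proof is to compute the metric operator relating $\omega$ and ${\bf g}^{\Sigma}_{ind}$. Writing $\boldsymbol{\tau}_{db}^{\Sigma}(u,v)=({\bf g}^{\Sigma}_{ind})^{-1}({\bf i}_u\omega,{\bf i}_v\omega)$, I would introduce the operator $J:T_L\Sigma\to T_L\Sigma$ characterized by ${\bf k}(Ju,w)=\omega(u,w)$ for all $w\in T_L\Sigma$, so that ${\bf i}_u\omega$ is the ${\bf k}$-dual of $Ju$ and hence $\boldsymbol{\tau}_{db}^{\Sigma}(u,v)={\bf k}(Ju,Jv)$. The key claim is that $J[L,X]=-X$ for $X\in{\frak g}^L$. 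This follows from the cyclic identity ${\bf k}([A,B],C)={\bf k}(A,[B,C])$, which is exactly the ad-invariance used in step $(**)$ of Theorem \ref{egalitate}: taking $u=[L,X]$ and $w=[L,W]$ one computes $\omega(u,w)={\bf k}(L,[X,W])={\bf k}([L,X],W)=-{\bf k}(X,[L,W])={\bf k}(-X,w)$, and nondegeneracy of ${\bf k}$ on $T_L\Sigma$ forces $Ju=-X$.

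With this in hand the comparison is immediate. For $u=[L,X]$ and $v=[L,Y]$ with $X,Y\in{\frak g}^L$ we get $\boldsymbol{\tau}_{db}^{\Sigma}(u,v)={\bf k}(Ju,Jv)={\bf k}(-X,-Y)={\bf k}(X,Y)$, whereas by definition ${\bf n}^{\Sigma}(u,v)=-{\bf k}(X^L,Y^L)=-{\bf k}(X,Y)$, since $X,Y$ already lie in ${\frak g}^L$ and thus equal their own ${\frak g}^L$-components. Therefore $\boldsymbol{\tau}_{db}^{\Sigma}=-{\bf n}^{\Sigma}$, as claimed.

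I expect the main obstacle to be the bookkeeping of signs rather than any genuine difficulty: one must fix the orientation convention of the orbit symplectic form through the Hamiltonian vector field and then apply ad-invariance in precisely the correct slot to extract $J[L,X]=-X$; an error in either place flips the overall sign and would spuriously identify $\boldsymbol{\tau}_{db}^{\Sigma}$ with $+{\bf n}^{\Sigma}$. A secondary point to treat carefully is the well-definedness of both ${\bf n}^{\Sigma}$ and $J$, which rests on $ad_L$ being invertible on ${\frak g}^L=Im(ad_L)$ and on ${\bf k}$ being nondegenerate there; both are supplied by the ${\bf k}$-orthogonal splitting. As an alternative one could instead run the same computation in the Darboux--Weinstein frame used in the proof of Theorem \ref{gradient th}, but the intrinsic operator computation above is cleaner and exhibits the sign transparently.
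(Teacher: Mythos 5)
Your proof is correct, but it takes a genuinely different route from the paper's. The paper argues indirectly through gradient vector fields: working in Darboux--Weinstein coordinates adapted to $\Sigma$, it applies Theorem \ref{gradient th} to the coordinate functions $u^i$ to get $\nabla_{\boldsymbol{\tau}_{db}^{\Sigma}}u^i=-{\bf v}_{u^i}$, and then Theorem \ref{bloch-ratiu} combined with Theorem \ref{egalitate} to get $\nabla_{{\bf n}^{\Sigma}}u^i=+{\bf v}_{u^i}$; since the differentials $du^i$ form a basis of the cotangent space, the two inverse-metric matrices must be negatives of each other. You instead compute both metrics pointwise and intrinsically on the orbit: using the Kirillov--Kostant--Souriau form $\omega([L,X],[L,Y])={\bf k}(L,[X,Y])$ and ad-invariance, you identify the operator $J$ relating $\omega$ to the induced metric as $J[L,X]=-X$ on the parametrization of $T_L\Sigma$ by ${\frak g}^L$, whence $\boldsymbol{\tau}_{db}^{\Sigma}([L,X],[L,Y])={\bf k}(X,Y)$, to be compared with ${\bf n}^{\Sigma}([L,X],[L,Y])=-{\bf k}(X,Y)$. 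Your route is self-contained: it needs neither Theorem \ref{bloch-ratiu} (which the paper, strictly speaking, must first extend from linear functions $H(L)={\bf k}(L,N)$ to the Darboux--Weinstein coordinate functions, a harmless but unstated step), nor Darboux--Weinstein coordinates, nor the gradient theorems; moreover it produces the sharper explicit formula $\boldsymbol{\tau}_{db}^{\Sigma}([L,X],[L,Y])={\bf k}(X,Y)$ for $X,Y\in{\frak g}^L$, from which the classical Theorem \ref{bloch-ratiu} could itself be recovered via Theorem \ref{gradient th}. What the paper's route buys is brevity given its prior results, and it displays the theorem as a direct consequence of the two gradient characterizations. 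One small correction to your closing remarks: the worry that a sign error in the convention for $\omega$ would flip the conclusion is unfounded, since $\boldsymbol{\tau}_{db}^{\Sigma}$ is quadratic in $\omega$ (replacing $\omega$ by $-\omega$ sends $J$ to $-J$ and leaves ${\bf k}(Ju,Jv)$ unchanged); the minus sign in the theorem comes solely from the fact that the ambient metric in the construction is ${\bf k}$ itself, negative definite in the compact case, whereas the normal metric is built from $-{\bf k}$.
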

\begin{proof}
Let $({\bf p},{\bf q},{\bf z})$ be a system of Darboux--Weinstein local coordinates adapted to the regular adjoint orbit $\Sigma$ and we make the notations ${\bf u}=({\bf p},{\bf q})$. If $x_0=({\bf u}_0,{\bf 0})$ is an arbitrary point of $\Sigma$ that belongs to the domain of the adapted local coordinates system, then by Theorem \ref{gradient th} we have
$$[\boldsymbol{\tau}_{db}^{\Sigma}({\bf u}_0)]^{-1}\cdot du^i({\bf u}_0)=\nabla_{\boldsymbol{\tau}_{db}^{\Sigma}}u^i({\bf u}_0)=-{\bf v}_{u^i}({\bf u}_0,{\bf 0}),~~~\mbox{all}~~~i=\overline{1,\dim \Sigma}.$$
By Theorem \ref{bloch-ratiu} and Theorem \ref{egalitate} we also have
\begin{align*}
[{\bf n}^{\Sigma}({\bf u}_0)]^{-1}\cdot du^i({\bf u}_0)&=\nabla_{{\bf n}^{\Sigma}}u^i({\bf u}_0)=[x_0,[x_0,\nabla u^i(x_0)]]\\
&={\bf v}_{u^i}({\bf u}_0,{\bf 0}),~~~\mbox{all}~~~i=\overline{1,\dim \Sigma}.
\end{align*}
As $\{du^i({\bf u}_0)|~i=\overline{1,\dim\Sigma}\}$ is a base for $T_{{\bf u}_0}^*\Sigma$ we obtain the equality
$$[\boldsymbol{\tau}_{db}^{\Sigma}({\bf u}_0)]=-[{\bf n}^{\Sigma}({\bf u}_0)].$$
\end{proof}

On a regular symplectic leaf $\Sigma$, a sufficient condition for the matrix $[[{\bf g}(x)]_{_{|T_x\Sigma \times T_x\Sigma}}]^{-1}$ to be equal with the matrix $[{\bf g}^{-1}(x)]_{_{|T_x\Sigma \times T_x\Sigma}}$ is the compatibility condition that the Poisson bivector ${\bf \Pi}$ is ${\bf g}$-parallel, i.e.
$$\nabla {\bf \Pi}=0,\leqno{({\bf C})}$$
where $\nabla$ is the covariant derivative on the Riemannian manifold $(M,{\bf g})$, see \cite{izu}.

\section{Lie algebra $sl(2,\mathbb{R})$ and hyperbolic geometry}
In this section we show the connection between the structure of the semi-simple non-compact Lie algebra $sl(2,\mathbb{R})$ and the Poincar\'{e} open disc model for the hyperbolic geometry. More precisely, we prove that double bracket metric $\boldsymbol{\tau}_{db}^{\mathbb{H}^2}$ on the connected component of the adjoint orbit (symplectic leaf) given by the upper-sheet $\mathbb{H}^2$ of the two-sheeted hyperboloid is the hyperbolic metric in the Poincar\'{e} open disc model.

A base for Lie algebra $sl(2,\mathbb{R})$ is given by
$${\bf e}_1=\left(\begin{array}{cc}
0&1\\
0&0
\end{array}\right);~~~{\bf e}_2=\left(\begin{array}{cc}
0&0\\
1&0
\end{array}\right);~~~{\bf e}_3=\left(\begin{array}{cc}
1&0\\
0&-1
\end{array}\right).$$
For the Lie algebra structure we have
$$[{\bf e}_1,{\bf e}_2]={\bf e}_3;~~~[{\bf e}_1,{\bf e}_3]=-2{\bf e}_1; ~~~[{\bf e}_2,{\bf e}_3]=2{\bf e}_2,$$
and the associated adjoint operators 
$$\mbox{ad}~{\bf e}_1=\left(\begin{array}{ccc}
0&0&-2\\
0&0&0\\
0&1&0
\end{array}\right);~~~\mbox{ad}~{\bf e}_2=\left(\begin{array}{ccc}
0&0&0\\
0&0&2\\
-1&0&0
\end{array}\right);~~~\mbox{ad}~{\bf e}_3=\left(\begin{array}{ccc}
2&0&0\\
0&-2&0\\
0&0&0
\end{array}\right).$$
The matrix corresponding to the Killing metric in this base is given by
$$[{\bf k}]=\left(\begin{array}{ccc}
0&4&0\\
4&0&0\\
0&0&8
\end{array}\right).$$
We make the following change of base
$${\bf e}_x =\frac{1}{2\sqrt{2}}({\bf e}_1 +{\bf e}_2);~~~
{\bf e}_y=\frac{1}{2\sqrt{2}}{\bf e}_3;~~~
{\bf e}_ z=\frac{1}{2\sqrt{2}}({\bf e}_1 -{\bf e}_2 ).$$
For the Lie bracket we have
$$[{\bf e}_x,{\bf e}_y]=-\frac{1}{\sqrt{2}}{\bf e}_z;~~~[{\bf e}_x,{\bf e}_z]=-\frac{1}{\sqrt{2}}{\bf e}_y; ~~~[{\bf e}_y,{\bf e}_z]=\frac{1}{\sqrt{2}}{\bf e}_x,$$
and the adjoint operators are
$$\mbox{ad}~{\bf e}_x=-\frac{1}{\sqrt{2}}\left(\begin{array}{ccc}
0&0&0\\
0&0&1\\
0&1&0
\end{array}\right);~~~\mbox{ad}~{\bf e}_y=\frac{1}{\sqrt{2}}\left(\begin{array}{ccc}
0&0&1\\
0&0&0\\
1&0&0
\end{array}\right);~~~\mbox{ad}~{\bf e}_z=\frac{1}{\sqrt{2}}\left(\begin{array}{ccc}
0&-1&0\\
1&0&0\\
0&0&0
\end{array}\right).$$
In this new base the Killing metric becomes 
$${\bf k}(x,y,z)=dx\otimes dx+dy\otimes dy-dz\otimes dz,$$
where $(x,y,z)$ are the coordinates on $\mathbb{R}^3$ corresponding to the base $\{{\bf e}_x,{\bf e}_y,{\bf e}_z\}$.
The gradient vector fields with respect to the Killing metric corresponding to coordinate functions are
$$\nabla x={\bf e}_x;~~~\nabla y={\bf e}_y;~~~\nabla z=-{\bf e}_z.$$
Using the formula \eqref{2} for the Lie-Poisson bracket on $sl(2,\mathbb{R})$ we have
\begin{align*}
\{x,y\}&={\bf k}((x,y,z),[\nabla x,\nabla y])={\bf k}(x{\bf e}_x+y{\bf e}_y+z{\bf e_z},[{\bf e}_x,{\bf e}_y])\\
&={\bf k}\left(x{\bf e}_x+y{\bf e}_y+z{\bf e_z},-\frac{1}{\sqrt{2}}{\bf e}_z\right)=\frac{1}{\sqrt{2}}z.
\end{align*}
Analogously, 
$$\{x,z\}=\frac{1}{\sqrt{2}}y;~~~\{y,z\}=-\frac{1}{\sqrt{2}}x.$$
Consequently, the matrix associated to the Poisson tensor is
$$\boldsymbol{\Pi}=\frac{1}{\sqrt{2}}\left(\begin{array}{ccc}
0&z&y\\
-z&0&-x\\
-y&x&0
\end{array}\right),$$
and a Casimir function is
$C(x,y,z)=x^2+y^2-z^2$.\\
The matrix corresponding to the double bracket contravariant tensor ${\bf D}$ is given by
$${\bf D}=[\boldsymbol{\Pi}^T][{\bf k}][\boldsymbol{\Pi}]=\frac{1}{2}\left(\begin{array}{ccc}
-y^2+z^2&xy&xz\\
xy&-x^2+z^2&yz\\
xz&yz&x^2+y^2
\end{array}\right).$$
Let $\mathbb{H}^2$ be the upper-sheet of the two-sheeted hyperboloid $x^2+y^2-z^2=-1$ and $\mathbb{D}^2=\{(u,v)\in \mathbb{R}^2|~u^2+v^2<1\}$ be the open disc in $\mathbb{R}^2$.
Using the stereographic projection with center $(0,0,-1)$ we obtain the set of local coordinates for $\mathbb{H}^2$, $\Phi:\mathbb{D}^2\rightarrow \mathbb{H}^2\subset \mathbb{R}^3$:
$$\left\{
\begin{array}{l}
u=\frac{x}{1+z}\\
v=\frac{y}{1+z}.
\end{array}
\right.$$
The two 1-forms $du=\frac{1}{1+z}dx-\frac{x}{(1+z)^2}dz$ and $dv=\frac{1}{1+z}dy-\frac{y}{(1+z)^2}dz$ generate a base for the cotangent space $T_{(x,y,z)}^*\mathbb{H}^2$, where $(x,y,z)\in \mathbb{H}^2$. We have the following computations:
\begin{align*}
{\bf D}(x,y,z)(du,du)&=\frac{1}{2}\left(\frac{1}{1+z},0,-\frac{x}{(1+z)^2}\right)\left(\begin{array}{ccc}
1+x^2&xy&xz\\
xy&1+y^2&yz\\
xz&yz&z^2-1
\end{array}\right)\left(\begin{array}{c}
\frac{1}{1+z}\\
0\\
-\frac{x}{(1+z)^2}
\end{array}\right)\\
&=\frac{1}{2(1+z)^2}.
\end{align*}
Analogously, 
$${\bf D}(x,y,z)(du,dv)={\bf D}(x,y,z)(dv,du)=0,~{\bf D}(x,y,z)(dv,dv)=\frac{1}{2(1+z)^2}.$$
Consequently, the matrix associated to the contravariant tensor ${\bf D}$ restricted to the adjoint orbit $\mathbb{H}^2$ is given by
$$\left[{\bf D}(x,y,z)_{|_{T_{(x,y,z)}^*\mathbb{H}^2\times T_{(x,y,z)}^*\mathbb{H}^2}}\right]=\frac{1}{2}\left(\begin{array}{cc}
\frac{1}{(1+z)^2}&0\\
0&\frac{1}{(1+z)^2}
\end{array}\right)$$
and 
$$\left[{\bf D}(x,y,z)_{|_{T_{(x,y,z)}^*\mathbb{H}^2\times T_{(x,y,z)}^*\mathbb{H}^2}}\right]^{-1}=\left[\boldsymbol{\tau}_{db}^{\mathbb{H}^2}(x,y,z)\right]=2\left(\begin{array}{cc}
(1+z)^2&0\\
0&(1+z)^2
\end{array}\right).$$
Having $z=\frac{1+u^2+v^2}{1-u^2-v^2}$ we obtain
$$\boldsymbol{\tau}_{db}^{\mathbb{H}^2}(u,v)
=\frac{8}{[1-(u^2+v^2)]^2}(du^2+dv^2),$$
which is twice the hyperbolic metric for the Poincar\'{e} open disc model. If we drop the coefficient $\frac{1}{\sqrt{2}}$ in the expression of the Poisson tensor $\boldsymbol{\Pi}$, then we obtain the hyperbolic metric.

Note that the induced metric on $\mathbb{H}^2\subset (\mathbb{R}^3,{\bf k})$ is
$${\bf k}_{ind}^{\mathbb{H}^2}(u,v)=\frac{4}{[1-(u^2+v^2)]^2}(du^2+dv^2),$$
which is the hyperbolic metric for the Poincar\'{e} open disc model.

Next we show that the induced metric and double bracket metric differ on the upper-sheet of the two-sheeted hyperboloids $\mathbb{H}^2_c:=\{(x,y,z)\in\R^3|x^2+y^2-z^2=-c^2\}\subset (\mathbb{R}^3,{\bf k})$. Using the following set of local coordinates for $\mathbb{H}^2_c$:
$$\left\{
\begin{array}{l}
x=c \sinh(\nu)\cos(u)\\
y=c \sinh(\nu)\sin(u)\\
z=c \cosh(\nu)
\end{array}
\right. ,$$
we obtain 
$$\boldsymbol{\tau}_{db}^{\mathbb{H}^2_c}(u,\nu)
=2[\cosh(\nu)^2-1]du^2+2d\nu^2,$$ and

$${\bf k}_{ind}^{\mathbb{H}^2_c}(u,\nu)=c^2[\cosh(\nu)^2-1]du^2+c^2d\nu^2.$$

We will repeat the above computations for the regular adjoint orbits given by the one-sheeted hyperboloid
$\mathcal{H}_l:=\{(x,y,z)\in\R|x^2+y^2-z^2=l^2\}\subset (\mathbb{R}^3,{\bf k})$. Using the following set of local coordinates for $\mathcal{H}_l$:
$$\left\{
\begin{array}{l}
x=l \cosh(\nu)\cos(u)\\
y=l \cosh(\nu)\sin(u)\\
z=l \sinh(\nu)
\end{array}
\right. ,$$
we obtain 
$$\boldsymbol{\tau}_{db}^{\mathcal{H}_l}(u,\nu)
=-2\cosh^2(\nu)du^2+2d\nu^2,$$ and
$${\bf k}_{ind}^{\mathcal{H}_l}(u,\nu)=l^2\cosh^2(\nu)du^2-l^2d\nu^2.$$
The semi-simple Lie algebra $sl(2,\mathbb{R})$ has another two regular orbits given by the connected components of the cone ${\cal C}=\{(x,y,z)\in \mathbb{R}^3|x^2+y^2-z^2=0\}$ without its vertex $(0,0,0)$. On this two symplectic leaves the induced metric by the pseudo-Riemannian metric ${\bf k}$ is degenerate and the construction given in Definition \ref{double-bracket-metric} does not apply.

\medskip

\noindent {\bf Acknowledgements.} This work was supported by a grant of the Romanian National Authority for Scientific
Research, CNCS - UEFISCDI, project number PN-II-RU-TE-2011-3-0006.

\end{document}